\newtheorem{theorem}{Theorem}
\theoremstyle{definition}
\theoremstyle{remark}
\numberwithin{equation}{section}
\begin{document}

\title[One-sentence elementary proof of Fa\`a di Bruno's formula]{A one-sentence elementary proof of the combinatorial Fa\`a di Bruno's formula}

\author{Samuel Allen Alexander}

\maketitle

Fa\`a di Bruno's formula generalizes the chain rule of elementary calculus.
It tells us the $n$th derivative $f(g(x))^{(n)}$ of a composition of functions.
See
\cite{johnson2002curious} and
\cite{craik2005prehistory}
for the remarkable history of this formula.
In the past two decades, shockingly simple proofs were found
\cite{hardy2006combinatorics}
\cite{ma2009higher}
for multivariable generalizations of the formula.
The multivariable nature of these proofs, however, makes them
less accessible than they should, in our opinion, be.
We present the single-variable special-case, which is so simple
that it can be expressed in a single sentence.

\begin{theorem}
\label{mainthm}
    (Combinatorial Fa\`a di Bruno's formula)
    Let $n>0$.
    If $f$ and $g$ possess derivatives up to order $n$, then
    \[
        f(g(x))^{(n)} = \sum_{\pi\in\Pi_n}f^{(|\pi|)}(g(x))
            \prod_{B\in\pi} g^{(|B|)}(x)
    \]
    where $\pi$ ranges over the set $\Pi_n$ of all partitions of $\{1,\ldots,n\}$
    and $B$ ranges over the blocks in $\pi$.
\end{theorem}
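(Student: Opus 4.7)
The plan is induction on $n$. The base case $n=1$ is immediate: $\Pi_1=\{\{\{1\}\}\}$, so the right-hand side reduces to $f'(g(x))\,g'(x)$, which is the ordinary chain rule.

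For the inductive step, I would assume the formula at level $n$ and differentiate both sides with respect to $x$. On the right, the product rule applied to the summand $f^{(|\pi|)}(g(x))\prod_{B\in\pi}g^{(|B|)}(x)$ produces $1+|\pi|$ new terms: one from differentiating $f^{(|\pi|)}(g(x))$ (which by the chain rule introduces an extra factor of $g'(x)$ and raises the order of $f$ by one), and one for each block $B\in\pi$ from differentiating the corresponding factor $g^{(|B|)}(x)$ (which raises the order of that single $g$-factor by one).

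The key combinatorial observation is that these $1+|\pi|$ derivatives correspond bijectively to the ways of extending $\pi$ to a partition $\pi'$ of $\{1,\ldots,n+1\}$: differentiating $f^{(|\pi|)}(g(x))$ corresponds to adjoining $\{n+1\}$ as a new singleton block of $\pi$, while differentiating $g^{(|B|)}(x)$ corresponds to inserting $n+1$ into the block $B$. Every $\pi'\in\Pi_{n+1}$ arises from exactly one such pair, recovered by locating the block of $\pi'$ containing $n+1$ and removing $n+1$ from it (deleting the block entirely if it was a singleton). Summing over all pairs therefore reproduces the RHS at level $n+1$.

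The main obstacle is the bookkeeping in the inductive step, especially recognizing that the extra $g'(x)$ produced when differentiating $f^{(|\pi|)}(g(x))$ is exactly $g^{(|\{n+1\}|)}(x)$, so that the new term genuinely matches the summand indexed by $\pi\cup\{\{n+1\}\}$. Once the bijection between derivative-choices and block-extensions is articulated cleanly, no further difficulty remains, and there are no analytic subtleties since the hypothesis provides all the derivatives the computation uses.
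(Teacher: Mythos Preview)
Your proposal is correct and follows exactly the same approach as the paper: induction on $n$, applying the product rule to each summand, and identifying the $1+|\pi|$ resulting terms with the $1+|\pi|$ ways of extending $\pi$ to a partition of $\{1,\ldots,n+1\}$ (new singleton block versus inserting $n+1$ into an existing block), together with the observation that this correspondence is a bijection. The only difference is that you spell out the base case and the inverse of the bijection explicitly, whereas the paper compresses everything into a single sentence.
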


\begin{proof}
    The theorem is proved by induction on $n$, using the fact that,
    for each partition $\pi=\{B_1,\ldots,B_k\}$ of $\{1,\ldots,n\}$,
    rewriting $f^{(|\pi|)}(g(x))\prod_{B\in\pi} g^{(|B|)}(x)$
    as $f^{(k)}(g(x))g^{(|B_1|)}(x)\cdots g^{(|B_k|)}(x)$,
    the product rule says that the derivative thereof
    equals a sum of one term in which $f^{(k)}(g(x))$ is replaced by
    $f^{(k+1)}(g(x))g^{(1)}(x)$ (this term corresponds to the partition of $\{1,\ldots,n+1\}$
    obtained from $\pi$ by adding $\{n+1\}$ as a new block)
    plus $k$ terms in which a factor $g^{(|B_i|)}(x)$ is replaced by
    $g^{(|B_i|+1)}(x)$ (which corresponds to the partition of $\{1,\ldots,n+1\}$ obtained
    from $\pi$ by adding $n+1$ to $B_i$), and every partition of $\{1,\ldots,n+1\}$
    is obtained from exactly one partition of $\{1,\ldots,n\}$ in exactly one of these ways.
\end{proof}

The non-combinatorial version of Fa\`a di Bruno's formula,
\[
    f(g(x))^{(n)}
    =
    \sum_{k_1+2k_2+\cdots+nk_n=n}
    \frac{n!}{k_1!1!^{k_1}\cdots k_n!n!^{k_n}}
    f^{(k_1+\cdots+k_n)}(g(x))\prod_{i=1}^n g^{(i)}(x)^{k_i},
\]
follows by using a simple counting argument to show that the number of
partitions of $\{1,\ldots,n\}$ with $k_i$ size-$i$ blocks ($1\leq i\leq n$)
is $n!/k_1!1!^{k_1}\cdots k_n!n!^{k_n}$.

\bibliographystyle{plain}
\bibliography{main}

\end{document}